\documentclass[12pt]{article}
\usepackage{amsfonts,amssymb,amsmath,amsthm}
\date{}
\newcommand{\R}{{\mathbb R}}
\newcommand{\Q}{{\mathbb Q}}
\newcommand{\Z}{{\mathbb Z}}
\newcommand{\N}{{\mathbb N}}
\newcommand{\T}{{\mathbb T}}
\newcommand{\Cl}{\mathrm{Cl}\,}
\newcommand{\const}{{\rm const}}
\newcommand{\pr}{\mathop{\rm pr}}
\renewcommand{\div}{{\rm div}}

\newtheorem{theorem}{Theorem}
\newtheorem{lemma}{Lemma}
\newtheorem{proposition}{Proposition}
\newtheorem{corollary}{Corollary}

\theoremstyle{definition}
\newtheorem{definition}{Definition}
\newtheorem{remark}{Remark}
\newtheorem{example}{Example}

\def\Xint#1{\mathchoice
{\XXint\displaystyle\textstyle{#1}}
{\XXint\textstyle\scriptstyle{#1}}
{\XXint\scriptstyle\scriptscriptstyle{#1}}
{\XXint\scriptscriptstyle\scriptscriptstyle{#1}}
\!\int}
\def\XXint#1#2#3{{\setbox0=\hbox{$#1{#2#3}{\int}$ }
\vcenter{\hbox{$#2#3$ }}\kern-.57\wd0}}

\def\dashint{\Xint-}
\numberwithin{equation}{section}

\title{On decay of almost periodic viscosity solutions to Hamilton-Jacobi equations}
\author{Evgeny Yu. Panov}
\sloppy
\begin{document}
\maketitle
\begin{abstract}
We establish that a viscosity solution to a multidimensional Hamilton-Jacobi equation with a convex non-degenerate hamiltonian and Bohr almost periodic initial data decays to its infimum as time $t\to+\infty$.
\end{abstract}

\section{Introduction}
In the half-space $\Pi=\R_+\times\R^n$, $\R_+=(0,+\infty)$, we consider the Cauchy problem for a
first order Hamilton-Jacobi equation
\begin{equation}\label{1}
u_t+H(\nabla_x u)=0
\end{equation}
with merely continuous hamiltonian function $H(v)\in C(\R^n)$, and with initial condition
\begin{equation}\label{2}
u(0,x)=u_0(x)\in BUC(\R^n),
\end{equation}
where $BUC(\R^n)$ denotes the Banach space of bounded uniformly continuous functions on $\R^n$
equipped with the uniform norm $\|u\|_\infty=\sup|u(x)|$. We recall the notions of
superdifferential $D^+u$ and subdifferential $D^-u$ of a continuous function $u(t,x)\in C(\Pi)$:
\begin{eqnarray*}
D^+u(t_0,x_0)=\{ \ (s,v)=\nabla \varphi(t_0,x_0) \ | \ \varphi(t,x)\in C^1(\Pi), \\ (t_0,x_0) \mbox{ is a point of local maximum of } u-\varphi \ \}, \\
D^-u(t_0,x_0)=\{ \ (s,v)=\nabla \varphi(t_0,x_0) \ | \ \varphi(t,x)\in C^1(\Pi), \\ (t_0,x_0) \mbox{ is a point of local minimum of } u-\varphi \ \}.
\end{eqnarray*}
Let us denote by $BUC_{loc}(\bar\Pi)$ the space of continuous functions on $\bar\Pi=\Cl\Pi=[0,+\infty)\times\R^n$, which are bounded and uniformly continuous in any layer $[0,T)\times\R^n$, $T>0$.
Recall the notion of viscosity solution of (\ref{1}), (\ref{2}).

\begin{definition}\label{def1}

A function $u(t,x)\in BUC_{loc}(\bar\Pi)$ is called a viscosity subsolution (v.subs. for short) of problem (\ref{1}), (\ref{2}) if
$u(0,x)\le u_0(x)$ and $s+H(v)\le 0$ for all $(s,v)\in D^+u(t,x)$, $(t,x)\in\Pi$.

A function  $u(t,x)\in BUC_{loc}(\bar\Pi)$ is called a viscosity supersolution (v.supers.) of problem (\ref{1}), (\ref{2}) if
$u(0,x)\ge u_0(x)$ and $s+H(v)\ge 0$ for all $(s,v)\in D^-u(t,x)$, $(t,x)\in\Pi$.

Finally, $u(t,x)\in BUC_{loc}(\bar\Pi)$ is called a viscosity solution (v.s.) of (\ref{1}), (\ref{2}) if it is a v.subs. and a v.supers.
of this problem simultaneously.
\end{definition}
The theory of v.s. was developed in \cite{CL,CEL}. This theory extended the earlier results of S.N. Kruzhkov \cite{KrHJ,KrHJ1}.

It is known that for each $u_0(x)\in BUC(\R^n)$ there exists a unique v.s. of problem (\ref{1}), (\ref{2}). The uniqueness readily follows from the more general comparison principle.

\begin{theorem}[see \cite{CEL}]\label{th1}
Let $u_1(t,x), u_2(t,x)\in  BUC_{loc}(\bar\Pi)$ be a v.subs. and a v.supers. of (\ref{1}), (\ref{2}) with initial data $u_{10}(x), u_{20}(x)$, respectively. Assume that $u_{10}(x)\le u_{20}(x)$ $\forall x\in\R^n$.
Then $u_1(t,x)\le u_2(t,x)$ $\forall (t,x)\in\Pi$.
\end{theorem}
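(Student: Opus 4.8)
The plan is to establish the comparison principle by the classical doubling-of-variables technique, arguing by contradiction. Suppose $\sigma := \sup_{\Pi}(u_1 - u_2) > 0$; since $u_1,u_2$ are bounded on every layer, this supremum is already positive and finite on some strip $[0,T)\times\R^n$ with $T$ fixed and large, where both functions are bounded and uniformly continuous with a common modulus of continuity $\omega$. For small parameters $\varepsilon,\delta,\eta,\lambda>0$ I would introduce the doubled functional
\[
\Phi(t,x,s,y)=u_1(t,x)-u_2(s,y)-\frac{|x-y|^2}{2\varepsilon}-\frac{(t-s)^2}{2\delta}-\eta(|x|^2+|y|^2)-\frac{\lambda}{T-t}
\]
on $([0,T)\times\R^n)^2$. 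The confinement term $\eta(|x|^2+|y|^2)$ forces $\Phi\to-\infty$ as $|x|+|y|\to\infty$, so a global maximizer $(t_0,x_0,s_0,y_0)$ exists; the barrier $-\lambda/(T-t)$ forces $\Phi\to-\infty$ as $t\to T^-$, so comparing the maximal value with the value of $\Phi$ near a near-optimal diagonal point for $\sigma$ gives $\Phi(t_0,x_0,s_0,y_0)\ge\sigma/2>0$ and, since $u_1-u_2$ is bounded, a uniform lower bound $T-t_0\ge\lambda/C_0>0$.

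First I would record the standard localization estimates: maximality yields $|x_0-y_0|^2/\varepsilon\to0$, $(t_0-s_0)^2/\delta\to0$, and $\eta(|x_0|^2+|y_0|^2)\to0$ (hence $\eta|x_0|,\eta|y_0|\to0$) as the parameters vanish in the appropriate order; in particular $s_0<T$ for small $\delta$. Next I would rule out the initial time: using $u_1(0,\cdot)\le u_{10}\le u_{20}\le u_2(0,\cdot)$ together with $|t_0-s_0|\to0$ and $|x_0-y_0|\to0$, one checks that $t_0=0$ or $s_0=0$ would force $\Phi(t_0,x_0,s_0,y_0)\le\omega(|x_0-y_0|)+o(1)\to0$, contradicting $\Phi\ge\sigma/2$. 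Hence $t_0,s_0>0$ for small parameters, and both points lie in the interior $\Pi$.

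At this stage the smooth part of $\Phi$ serves as a $C^1$ test function. Freezing $(s_0,y_0)$, the function $u_1$ minus the corresponding smooth terms attains a local maximum at $(t_0,x_0)\in\Pi$, so the subsolution inequality applies there; symmetrically, freezing $(t_0,x_0)$, the supersolution inequality applies to $u_2$ at $(s_0,y_0)\in\Pi$. Writing $v_1=(x_0-y_0)/\varepsilon+2\eta x_0$ and $v_2=(x_0-y_0)/\varepsilon-2\eta y_0$, the two inequalities read
\[
\frac{t_0-s_0}{\delta}+\frac{\lambda}{(T-t_0)^2}+H(v_1)\le0,\qquad \frac{t_0-s_0}{\delta}+H(v_2)\ge0.
\]
Subtracting cancels the time-difference term and leaves $\lambda/(T-t_0)^2\le H(v_2)-H(v_1)$, whose left-hand side is bounded below by $\lambda/T^2>0$ independently of $\varepsilon,\delta,\eta$.

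The conclusion would then follow by letting $\varepsilon,\delta,\eta\to0$: since $v_1-v_2=2\eta(x_0+y_0)\to0$ and $H$ is continuous, $H(v_2)-H(v_1)\to0$, contradicting the fixed lower bound $\lambda/T^2$, and letting $\lambda\to0$ at the very end completes the argument. I expect the main obstacle to be precisely this last step. Because $H$ is merely continuous — neither Lipschitz nor coercive — and the data are only uniformly continuous, one cannot invoke continuity of $H$ until the gradient arguments $v_1,v_2$ are confined to a fixed compact set, and the localization estimate $|x_0-y_0|^2/\varepsilon\to0$ does \emph{not} by itself bound $|x_0-y_0|/\varepsilon$. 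Resolving this is exactly where the translation-invariant, gradient-only structure of the Hamiltonian is essential: the two arguments differ only by the vanishing penalization gradient $2\eta(x_0+y_0)$, and one must combine this with the modulus-of-continuity estimates for $u_1,u_2$ (in the spirit of the structure conditions used in \cite{CEL}) to keep $v_1$ and $v_2$ in a common compact set; once that compactness is secured, uniform continuity of $H$ on compacts closes the argument.
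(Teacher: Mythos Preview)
The paper does not supply its own proof of this statement: Theorem~\ref{th1} is quoted from \cite{CEL} and used as a black box, so there is nothing in the paper to compare against directly. Your proposal is precisely the classical doubling-of-variables argument of Crandall--Evans--Lions that the citation points to, specialized to the $x$-independent Hamiltonian $H=H(\nabla_x u)$, and the outline is correct. You have also correctly isolated the only genuine technical point: with $H$ merely continuous one must confine the test gradients $v_1,v_2$ to a fixed compact before invoking continuity, and this is exactly where the BUC hypothesis on $u_1,u_2$ enters via the comparison $\Phi(t_0,x_0,s_0,y_0)\ge\Phi(t_0,y_0,s_0,y_0)$, which yields $|x_0-y_0|^2/(2\varepsilon)\le \omega(|x_0-y_0|)+o(1)$ and hence a bound on $|x_0-y_0|/\varepsilon$ once $|x_0-y_0|$ is small. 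With that in hand, $v_1-v_2=2\eta(x_0+y_0)\to 0$ on a compact set finishes the contradiction as you describe.
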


\begin{corollary}\label{cor1}
Let $u_1(t,x), u_2(t,x)\in  BUC_{loc}(\bar\Pi)$ be v.s. of (\ref{1}), (\ref{2}) with initial data $u_{10}(x), u_{20}(x)$, respectively. Then for all $t>0$
$$
\inf (u_{10}(x)-u_{20}(x))\le u_1(t,x)-u_2(t,x)\le \sup (u_{10}(x)-u_{20}(x)).
$$
In particular, $\|u_1-u_2\|_\infty\le\|u_{10}-u_{20}\|_\infty$.

\end{corollary}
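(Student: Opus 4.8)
The plan is to deduce Corollary~\ref{cor1} directly from the comparison principle of Theorem~\ref{th1} by a translation trick. The key observation is that the Hamilton-Jacobi equation~\eqref{1} is invariant under adding a constant to a solution: if $u(t,x)$ is a v.s. (or v.subs., or v.supers.) then so is $u(t,x)+c$ for any constant $c\in\R$, since adding a constant does not affect the gradients $(s,v)$ appearing in the sub/superdifferentials, and the initial condition shifts by the same constant. I would state this invariance explicitly as a first step.

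For the right-hand inequality, set $c=\sup_{x}(u_{10}(x)-u_{20}(x))$, which is finite because both initial data lie in $BUC(\R^n)$ and hence are bounded. Then $u_{10}(x)\le u_{20}(x)+c$ for all $x$. By the invariance just noted, $u_2(t,x)+c$ is a v.supers.\ (indeed a v.s.) of~\eqref{1},~\eqref{2} with initial data $u_{20}(x)+c$, while $u_1(t,x)$ is a v.subs.\ with initial data $u_{10}(x)\le u_{20}(x)+c$. Applying Theorem~\ref{th1} to the pair $(u_1,\,u_2+c)$ yields $u_1(t,x)\le u_2(t,x)+c$, that is, $u_1(t,x)-u_2(t,x)\le\sup(u_{10}-u_{20})$ for all $(t,x)\in\Pi$.

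The left-hand inequality follows symmetrically. Set $c'=\inf_x(u_{10}(x)-u_{20}(x))$, again finite by boundedness, so that $u_{20}(x)+c'\le u_{10}(x)$. Now $u_2(t,x)+c'$ is a v.subs.\ and $u_1(t,x)$ is a v.supers.\ of the problem with the appropriate ordered initial data, so Theorem~\ref{th1} applied to $(u_2+c',\,u_1)$ gives $u_2(t,x)+c'\le u_1(t,x)$, i.e.\ $\inf(u_{10}-u_{20})\le u_1(t,x)-u_2(t,x)$. Combining the two bounds gives the displayed double inequality, and taking absolute values together with $\|u_{10}-u_{20}\|_\infty=\max\{\sup(u_{10}-u_{20}),\,-\inf(u_{10}-u_{20})\}\ge 0$ yields $\|u_1-u_2\|_\infty\le\|u_{10}-u_{20}\|_\infty$.

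I do not expect any serious obstacle here: the corollary is essentially a repackaging of Theorem~\ref{th1}. The only point requiring a moment's care is verifying the constant-shift invariance of the viscosity sub/supersolution property, which is immediate from the definition since the test function $\varphi$ can simply be replaced by $\varphi+c$ without changing its gradient. The finiteness of the sup and inf, needed so that the shifts $c,c'$ are genuine real constants, is guaranteed by the hypothesis $u_{10},u_{20}\in BUC(\R^n)$.
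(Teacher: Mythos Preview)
Your argument is correct and follows essentially the same route as the paper: one sets $a=\inf(u_{10}-u_{20})$, $b=\sup(u_{10}-u_{20})$, observes that $a+u_2$ and $b+u_2$ are viscosity solutions with initial data $a+u_{20}\le u_{10}\le b+u_{20}$, and invokes Theorem~\ref{th1} to conclude. Your version merely spells out the constant-shift invariance more explicitly than the paper does.
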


\begin{proof}
Let
$$
a=\inf (u_{10}(x)-u_{20}(x)), \quad b=\sup (u_{10}(x)-u_{20}(x)),
$$
Obviously, the functions $a+u_2(t,x)$, $b+u_2(t,x)$ a v.s. of (\ref{1}), (\ref{2}) with initial data $a+u_{20}(x)$, $b+u_{20}(x)$, respectively. Since $a+u_{20}(x)\le u_{10}(x)\le b+u_{20}(x)$, then by Theorem~\ref{th1}
$a+u_2(t,x)\le u_1(t,x)\le b+u_2(t,x)$ $\forall (t,x)\in\Pi$, which completes the proof.
\end{proof}

In the case when $H(0)=0$ constants are v.s. of (\ref{1}). By Corollary~\ref{cor1} with $u_1=u$, $u_2=0$ we find that a v.s. $u=u(t,x)$ is bounded, namely $\|u\|_\infty\le\|u_0\|_\infty$. Notice that the requirement $H(0)=0$ does not reduce the generality because we can make the change
$\tilde u\to u+H(0)t$, which provides a v.s. $\tilde u$ of the problem
$\tilde u_t+H(\nabla_x \tilde u)-H(0)=0$, $\tilde u(0,x)=u_0(x)$,
with new hamiltonian  $\tilde H(v)=H(v)-H(0)$ satisfying the desired condition $\tilde H(0)=0$.

We are going to study long time decay property of v.s. to the problem (\ref{1}), (\ref{2}) with almost periodic initial data. Recall that the space $AP(\R^n)$ of Bohr (or uniform) almost periodic functions is a closure of trigonometric polynomials,
i.e. finite sums $\sum a_\lambda e^{2\pi i\lambda\cdot x}$, in the space $BUC(\R^n)$ (by $\cdot$ we denote the inner product in $\R^n$). It is clear that $AP(\R^n)$ contains continuous periodic functions (with arbitrary lattice of periods). 
Let $C_R$ be the cube
$$\{ \ x=(x_1,\ldots,x_n)\in\R^n \ | \ |x|_\infty=\max_{i=1,\ldots,n}|x_i|\le R/2 \ \}, \quad R>0.$$
It is known (see for instance \cite{LevZh}~) that for each function $u\in AP(\R^n)$ there exists the mean value
$$\bar u=\dashint_{\R^n} u(x)dx\doteq\lim\limits_{R\to+\infty}R^{-n}\int_{C_R} u(x)dx$$ and, more generally, the Bohr-Fourier coefficients
$$
a_\lambda=\dashint_{\R^n} u(x)e^{-2\pi i\lambda\cdot x}dx, \quad\lambda\in\R^n.
$$
The set
$$ Sp(u)=\{ \ \lambda\in\R^n \ | \ a_\lambda\not=0 \ \} $$ is called the spectrum of an almost periodic function  $u(x)$. It is known \cite{LevZh}, that the spectrum $Sp(u)$ is at most countable.

Now we assume that the initial function $u_0(x)\in AP(\R^n)$. Let $M_0$ be the smallest additive subgroup of $\R^n$ containing $Sp(u_0)$. Notice that in the case when $u_0$ is a continuous periodic function $M_0$ coincides with the dual lattice to the lattice of periods.

We are going to find an exact condition on the hamiltonian $H(v)$ for the fulfillment of the decay property
\begin{equation}\label{dec}
u(t,x)\rightrightarrows c=\const \quad \mbox{ as } t\to+\infty.
\end{equation}
We assume that $H(0)=0$ and introduce the following non-degeneracy condition of $H(v)$ at point $v=0$ in ``resonant'' directions $\xi\in M_0$
\begin{eqnarray}\label{ND}
\forall\xi\in M_0, \xi\not=0 \ \mbox{ the functions } \ s\to H(s\xi) \nonumber\\ \mbox{ are not linear in any interval } |s|<\delta, \delta>0.
\end{eqnarray}
Notice that the similar condition (non-linearity of resonant components of the flux vector $f(u)$) is known to be an exact condition for decay of almost periodic entropy solutions to scalar conservation laws $u_t+\div_x f(u)=0$, see
\cite{PaJHDE1} and, in the periodic case, \cite{Daf,PaAIHP,PaNHM}.

Let us demonstrate that requirement (\ref{ND}) is necessary for the decay of v.s. of (\ref{1}), (\ref{2}) such that $Sp(u_0)\subset M_0$. In fact, if (\ref{ND}) fails then there exist a nonzero vector $\xi\in M_0$ and a positive $\delta>0$ such that $H(s\xi)=\alpha s$ for $|s|\le \delta$ for some $\alpha\in\R$. Then the function $u(t,x)=\frac{\delta}{2\pi}\sin(2\pi(\xi\cdot x-\alpha t))$ is a classical (and, therefore, a v.s.) of (\ref{1}), (\ref{2})
with initial function $u_0(x)=\frac{\delta}{2\pi}\sin(2\pi\xi\cdot x)$ because
$$
\nabla_x u(t,x)=s\xi, \ u_t(t,x)=-s\alpha, \quad s=\delta\cos(2\pi(\xi\cdot x-\alpha t))\in [-\delta,\delta],
$$
so that $u_t+H(\nabla_x u)=0$. Obviously, $u_0(x)$ is a periodic function and
$$
Sp(u_0)=\{ \ k\xi \ | \ k\in\Z, k\not=0 \}\subset M_0.
$$
But the decay property is evidently violated.

\medskip
In the case $n=1$ condition (\ref{ND}) reads that  $H(v)$ is not linear in any vicinity of zero. In recent preprint \cite{prep} this condition was shown to be sufficient for the decay property.

In this paper we prove the similar result in arbitrary dimension $n\ge 1$ but for a convex hamiltonian. Our main results is the following

\begin{theorem}\label{thM}
Assume that the hamiltonian $H(v)$ is convex, $H(0)=0$, and condition (\ref{ND}) is satisfied.
Then the decay property (\ref{dec}) holds. Moreover, the limit constant $c=\inf u_0(x)$.
\end{theorem}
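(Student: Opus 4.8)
The plan is to base the argument on the Lax--Oleinik (Hopf) representation of the viscosity solution for convex $H$,
$$u(t,x)=\inf_{y\in\R^n}\Big[u_0(y)+tL\Big(\tfrac{x-y}{t}\Big)\Big],\qquad L(q)=\sup_{v\in\R^n}\big(q\cdot v-H(v)\big),$$
which is available because $H$ is convex (and finite, so that $L$ is superlinear). Since $H(0)=0$ one has $L\ge 0$, with $L(q)=0$ precisely on the compact convex set $\partial H(0)$. Evaluating the formula at points $x=y+tp$ with $p\in\partial H(0)$ and $u_0(y)$ close to $\inf u_0$, together with Corollary~\ref{cor1} compared against the constant $\inf u_0$, shows that $\inf_x u(t,x)=c:=\inf u_0$ for every $t\ge0$. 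Thus $c$ is conserved in time, and the entire theorem reduces to the decay of the oscillation, i.e.\ to proving $\sup_x u(t,x)-c\to0$ as $t\to+\infty$.

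Next I would exploit almost periodicity. For each $\epsilon>0$ the sublevel set $E_\epsilon=\{y:u_0(y)<c+\epsilon\}$ is relatively dense: choosing $y_0$ with $u_0(y_0)<c+\epsilon/2$ and using that the $(\epsilon/2)$-almost periods of $u_0$ are relatively dense, the translated set $y_0+\{\text{almost periods}\}$ lies in $E_\epsilon$ and is relatively dense. Moreover $u_0$ is constant along every direction orthogonal to $V:=\operatorname{span}(M_0)$ (because $Sp(u_0)\subset M_0\subset V$), so $E_\epsilon$ is invariant under translations by $V^\perp$. Substituting $y\in E_\epsilon$ into the Hopf formula and minimizing the running cost over the free component $\eta\in V^\perp$ reduces everything to the partially minimized Lagrangian $\tilde L(q)=\min_{\eta\in V^\perp}L(q+\eta)$, $q\in V$, which is convex, nonnegative, satisfies $\min_V\tilde L=0$, and obeys the duality identity $\tilde L=(H|_V)^*$ within $V$. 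One then reaches a bound of the form $u(t,x)\le c+\epsilon+t\,\tilde L(q)$, in which $q$ ranges over an $(R_\epsilon/t)$-dense subset of $V$ furnished by the near-minimum points and may be centered near a minimizer $p_*$ of $\tilde L$.

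The whole theorem now hinges on showing that $t\,\tilde L(q)\to0$ for such $q=p_*+w/t$, i.e.\ that the one-sided directional derivatives of $\tilde L$ at its minimizer vanish along the directions $w$ the near-minimum set makes available. By Legendre duality, $\tilde L$ fails to be differentiable at its minimum exactly along directions $v\in V$ in which $H|_V$ is affine on a segment through $0$; and affineness of $H$ on a symmetric segment in a direction $\xi$ is precisely the failure of the non-degeneracy condition (\ref{ND}) when $\xi\in M_0$. Hence (\ref{ND}) forces the ``corner'' directions of $\tilde L$ to be transverse to $M_0$, which is exactly the configuration that the resonant almost-periodic fine structure of $E_\epsilon$ cannot reinforce. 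Reconciling these two facts---proving that under (\ref{ND}) the characteristics issuing from the relatively dense near-minimum set can be steered to have speeds approaching $\partial H(0)$ with running cost $o(1/t)$, in spite of the possible non-smoothness of $L$ in non-resonant directions---is the main obstacle, and is where (\ref{ND}) is used in full strength; its necessity is mirrored by the traveling-wave example already exhibited in the introduction. Once $t\,\tilde L(q)\to0$ is established uniformly in $x$, the resulting estimate $\sup_x u(t,x)\le c+\epsilon+o(1)$ gives (\ref{dec}) with the limit constant $c=\inf u_0$.
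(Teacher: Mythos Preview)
Your outline captures the right skeleton---Hopf--Lax, the lower bound $u\ge c$ from Corollary~\ref{cor1}, and the upper bound obtained by plugging near-minimum points $y\in E_\epsilon$ into the variational formula so as to leave a running cost $tL(p_*+w/t)$ that must be shown to vanish as $t\to\infty$. But the step you yourself flag as ``the main obstacle'' is not addressed, and it is the entire content of the theorem. Relative denseness of $E_\epsilon$ only guarantees that for each $x$ there is some $y\in E_\epsilon$ with $|x-y|\le R_\epsilon$; it does \emph{not} say that the resulting direction $w=x-y-tp_*$ can be chosen so that the one-sided directional derivative $D_wH^*(p_*)=\max_{v\in\partial H^*(p_*)}w\cdot v$ is at most $\epsilon$. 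Your heuristic that (\ref{ND}) makes the corner directions of $L$ ``transverse to $M_0$'' is not enough: $\partial H^*(p_*)$ may well be a full-dimensional convex body inside $V=\operatorname{span}(M_0)$, so \emph{every} direction in $V$ is a corner direction, and yet (\ref{ND}) can still hold (think of $H(v)=|p\cdot v|$ with $p$ having rationally independent coordinates). What has to be shown is that the polar set $G=\{q:\,q\cdot v\le\epsilon\ \forall v\in\partial H^*(p_*)\}$ is rich enough, relative to $M_0$, to furnish a $\delta$-net modulo the near-periods of $u_0$; this is a genuine geometric/number-theoretic fact that does not drop out of convex duality alone.

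The paper supplies exactly this missing mechanism, but by a different route than yours. It first reduces to trigonometric polynomials, then lifts the problem via the basis $\lambda_1,\dots,\lambda_m$ of $M_0$ to a \emph{periodic} problem on $\R^m$ with Hamiltonian $\tilde H(w)=H(\Lambda^*w)$ (Proposition~\ref{prop2}), so that (\ref{ND}) becomes the lattice condition (\ref{ND1}). The periodic case is then handled by the key Proposition~\ref{prop1}: if a convex set $G\subset\R^m$ fails to project densely onto $\T^m$, some integer linear functional is bounded on $G$. Applied to the polar set $G$ above (Corollary~\ref{cor2}), this yields a nonzero $\xi\in\Z^m$ with $s\xi\in\partial H^*(p_*)$ for small $s$, contradicting (\ref{ND1}). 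Density of $\pr(G)$ in $\T^m$ then lets one choose finitely many $q_k\in G$ forming a $\delta$-net on the torus, so that for every $(t,x)$ some $x-p_0t-q_k$ lands near a minimum of $u_0$, and $tH^*(p_0+q_k/t)\to D_{q_k}H^*(p_0)\le\epsilon$. Your direct almost-periodic attack would need an analogue of Proposition~\ref{prop1} with $\Z^m$ replaced by $M_0$ and $\T^m$ replaced by the Bohr compactification; without it, the argument does not close.
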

Notice that in the case of strictly convex hamiltonian condition (\ref{ND}) is always satisfied. In this case the statement of Theorem~\ref{thM} follows from the general results of \cite[Theorem~8]{KrHJ}, \cite[Theorem~6]{KrHJ1}, for arbitrary
$u_0(x)\in BUC(\R^n)$.

We also remark that in the case of arbitrary $H(0)$ decay property (\ref{dec}) should be revised as
$$
u(t,x)+H(0)t\rightrightarrows c=\const \quad \mbox{ as } t\to+\infty.
$$

\section{The case of periodic initial function}

In the case of convex hamiltonian the unique v.s. $u(t,x)$ of (\ref{1}), (\ref{2}) can be found by the known Hopf-Lax-Oleinik formula \cite{KrHJ1,BE}
\begin{equation}\label{HLO}
u(t,x)=\min_{y\in\R^n}[u_0(y)+tH^*((x-y)/t)],
\end{equation}
where
\begin{equation}\label{Leg}
H^*(p)=\sup_{v\in\R^n} [p\cdot v-H(v)]
\end{equation}
is the Legendre transform of $H$. To simplify the proofs, we will assume in addition that the following coercivity condition is satisfied (in Remark~\ref{rem1} below we demonstrate how to avoid this condition)
\begin{equation}\label{coerc}
H(v)/|v|\to +\infty \quad \mbox{ as } v\to\infty
\end{equation}
(here and in the sequel we denote by $|v|$ the Euclidean norm of a finite-dimensional vector $v$).
Under this condition the Legendre conjugate function $H^*(p)$ is everywhere defined convex function satisfying the coercivity condition: $H^*(p)/|p|\to +\infty$ as $p\to\infty$.

It is known that for every $v_0\in\R^n$ the sub-differential $D^-H(v_0)$ of a convex function $H(v)$ on $\R^n$ coincides with the set
$$
\partial H(v_0)\doteq \{ \ p\in\R^n \ | \ H(v)-H(v_0)\ge p\cdot (v-v_0) \ \},
$$
which is a nonempty convex compact in $\R^n$. By the assumption $H(0)=0$ the conjugate function $H^*(p)\ge 0$ and $H^*(p_0)=0$ if and only if $p_0\in\partial H(0)$. We fix such
$p_0$ and introduce the convex set $\partial H^*(p_0)$. Since $0=H^*(p_0)=\min H(p)$ then $0\in\partial H^*(p_0)$. By the duality,
$$
H(v)=H^{**}(v)=\max_{p\in\R^n}[p\cdot v-H^*(p)].
$$
As readily follows from this relation,
\begin{equation}\label{lin}
H(v)=p_0\cdot v \quad \forall v\in\partial H^*(p_0).
\end{equation}

We fix $\varepsilon>0$ and introduce the polar set
\begin{equation}\label{dual}
G=(\partial H^*(p_0))'=\{ \ p\in\R^n \ | \ p\cdot v\le\varepsilon \ \forall v\in\partial H^*(p_0) \ \}.
\end{equation}
Then $G$ is a closed convex set and, by the bipolar theorem \cite[Theorem 14.5]{Rock},
\begin{equation}\label{ddual}
\partial H^*(p_0)=G'=\{ \ v\in\R^n \ | \ p\cdot v\le\varepsilon \ \forall p\in G \ \}.
\end{equation}
Let $\T^n=\R^n/\Z^n$ be the torus, $\pr:\R^n\to\T^n$ be the natural projection.

\begin{proposition}\label{prop1}
Let $G\subset\R^n$ be a convex set such that $\pr(G)$ is not dense in $\T^n$. Then there exists $\xi\in\Z^n$ such that the linear functional $p\to\xi\cdot p$ is bounded on $G$.
\end{proposition}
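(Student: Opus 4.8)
The plan is to prove the contrapositive: if for every nonzero $\xi\in\Z^n$ the functional $p\mapsto\xi\cdot p$ is unbounded on $G$ (equivalently, $G$ has infinite width $\sup_G\xi\cdot p-\inf_G\xi\cdot p$ in every integer direction), then $\pr(G)$ is dense in $\T^n$. I will argue by induction on $n$. Passing to $\overline G$ changes neither hypothesis nor conclusion (the bounded/unbounded character of $\xi\cdot p$ and the closure of $\pr(G)$ are the same for $G$ and $\overline G$), so I may assume $G$ closed; the empty case is trivial, and if $G$ were bounded every $\xi$ would be bounded on it, contradicting the hypothesis, so $G$ is unbounded. For $n=1$ an unbounded convex set is a half-line or a line, and its projection is all of $\T^1$; this is the base case.

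The engine of the induction is the recession cone $C=0^+G=\{d:G+d\subseteq G\}$, a nonempty closed convex cone that is nontrivial since $G$ is unbounded. Fix $g\in G$; then $g+C\subseteq G$, so $\pr(g)+\pr(C)\subseteq\pr(G)$ and hence $\overline{\pr(G)}\supseteq\pr(g)+\overline{\pr(C)}$. Now $\pr(C)$ is a sub-semigroup of the compact group $\T^n$ containing $0$, and the closure of such a semigroup is a closed subgroup; since it contains $\pr(C-C)=\pr(\mathrm{span}\,C)$ and is contained in $\overline{\pr(\mathrm{span}\,C)}$, it equals the subtorus $T_C:=\overline{\pr(\mathrm{span}\,C)}$. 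By Kronecker's theorem $T_C=\pr(V_\Q)$, where $V_\Q$ is the smallest rational subspace containing $V:=\mathrm{span}\,C$. Running $g$ over $G$ shows that $K:=\overline{\pr(G)}$ satisfies $K+T_C=K$, i.e.\ $K$ is invariant under $T_C$.

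To perform the inductive step I quotient by $T_C$. Since $V_\Q$ is rational, the projection $Q\colon\R^n\to\R^n/V_\Q\cong\R^{n'}$ carries $\Z^n$ onto the lattice $\Z^{n'}$ and induces $\pi\colon\T^n\to\T^n/T_C\cong\T^{n'}$, with $n'=n-\dim V_\Q<n$ (and $\dim V_\Q\ge1$ because $C\neq\{0\}$). The image $G'=Q(G)$ is convex and inherits the hypothesis: a nonzero $\eta\in\Z^{n'}$ corresponds to a nonzero $\tilde\eta\in\Z^n$ vanishing on $V_\Q$, with $\eta\cdot Q(p)=\tilde\eta\cdot p$, so boundedness of $\eta$ on $G'$ would make $\tilde\eta$ bounded on $G$, which is excluded. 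By the induction hypothesis (the case $n'=0$ being trivial) $\pr(G')=\pi(\pr(G))$ is dense in $\T^{n'}$, whence $\pi(K)=\T^{n'}$. As $K$ is closed and $T_C$-invariant it is $\pi$-saturated, so $K=\pi^{-1}(\pi(K))=\pi^{-1}(\T^{n'})=\T^n$; that is, $\pr(G)$ is dense, completing the induction.

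The point needing the most care — and the reason a one-line recession argument fails — is that ``$\xi\cdot p$ bounded on $G$'' is strictly stronger than $\xi\perp C$: a convex set may spread to infinity \emph{sub-linearly} in a direction orthogonal to all its recession directions (for instance $\{x_2\ge x_1^2\}$ in the direction $\xi=(1,0)$, where the only recession direction is $(0,1)$). The induction is arranged precisely so that such a sub-linear integer direction, surviving in the quotient $\R^{n'}$, is handled one dimension lower rather than by a direct recession estimate; the two facts to check carefully are the identity $\overline{\pr(C)}=\pr(V_\Q)$ and the inheritance of the hypothesis under $Q$. Finally, the Proposition is the contrapositive of what has been proved: if $\pr(G)$ is not dense, then some nonzero $\xi\in\Z^n$ makes $p\mapsto\xi\cdot p$ bounded on $G$.
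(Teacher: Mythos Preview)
Your proof is correct and takes a genuinely different route from the paper's.

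Both arguments hinge on the same geometric insight: the (translated) recession cone of $G$ projects to a sub-semigroup of $\T^n$, whose closure is a proper closed subgroup, and Pontryagin duality then furnishes an integer vector $\xi$ orthogonal to the span of that cone. The divergence is in the inductive scheme. The paper fixes the ambient dimension $n$ and inducts on $n-m(G)$, where $m(G)$ is the dimension of a maximal cone $C\subset G$; if the $\xi$ produced by duality happens not to be bounded on $G$, the paper uses an unbounded sequence in that direction to enlarge $G$ to $G'=\mathrm{Cl}(L+G)$ containing a cone of strictly larger dimension, and repeats. You instead prove the contrapositive and induct on $n$: you pass to the rational closure $V_\Q$ of $\mathrm{span}\,C$, quotient $\R^n$ and $\T^n$ by it, and push the problem down to $G'=Q(G)\subset\R^{n'}$ with $n'<n$, where the hypothesis is inherited because characters of $\T^{n'}$ lift to integer vectors in $V_\Q^\perp$.

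What your approach buys is a cleaner induction with no case split: the quotient always strictly lowers the dimension, and the argument is packaged in standard structural facts (recession cone of a closed unbounded convex set is nontrivial; closed sub-semigroups of compact groups are subgroups; Kronecker--Weyl identifies $\overline{\pr(V)}$ with $\pr(V_\Q)$). The paper's approach, by contrast, avoids invoking Kronecker--Weyl or the quotient torus explicitly, replacing them with a direct almost-periodicity argument to show the stabilizer $M$ of $K=\overline{\pr(C)}$ is a linear subspace; this is more self-contained but longer. Your parabola example and the accompanying remark about why a one-step recession argument fails is a useful clarification that neither proof can get by with $\xi\perp 0^+G$ alone.
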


\begin{proof}
Without loss of generality we may suppose that $G$ is a closed convex set and that $0\in G$.
We define the dimension $\dim A$ of any set $A\subset\R^n$ as the dimension of its linear span.
Let $m(G)$ be the maximal of such integer $m$ that there exists a convex cone $C\subset G$ of dimension $m$. Since the trivial cone $\{0\}\subset G$, then $0\le m(G)\le n$.
We will prove our statement by induction in the value $k=n-m(G)$. If $k=0$ then there exists a cone $C\subset G$ of full dimension $n$. This implies that $C$ contains some ball $B_\delta(x_0)=\{ x\in\R^n \ | \ |x-x_0|\le\delta \ \}$. Obviously, $B_{r\delta}(rx_0)=rB_\delta(x_0)\subset C$ for any $r>0$. In particular, the set $G\supset C$ contains balls of arbitrary  radius. This implies that $G$ contains a cube $K$ with side greater than $1$. Since $\pr(K)=\T^n$, we conclude that $\pr(G)=\T^n$. Hence in the case $k=0$ the assumptions that $\pr(G)$ is not dense in $\T^n$ cannot be satisfied and our assertion is true.

Now assume that $k>0$ and the statement of our proposition holds for all sets $G$ such that $n-m(G)<k$. We have to prove our statement in the case $n-m(G)=k$. Let $C\subset G$ be a cone of maximal dimension $n-k$, $L$ be the linear span of $C$. By our assumption $K=\Cl\pr(C)$ is a proper compact subset of $\T^n$. Introduce the set
$$
M=\{ \ x\in\R^n \ | \ \pr(\lambda x)+y\in K \ \forall y\in K, \ \lambda\ge 0 \ \}.
$$
Here $+$ is a standard group operation on the torus $\T^n$.
Since $$\pr(\lambda (c_1x_1+c_2x_2))+y=\pr(\lambda c_1x_1)+(\pr(\lambda c_2x_2)+y)\in K$$ for all $y\in K$, then $c_1x_1+c_2x_2\in M$ whenever $x_1,x_2\in M$, $c_1,c_2\ge 0$.
This means that $M$ is a convex cone. Let us demonstrate that $-x\in M$ for each $x\in M$. For that we define the standard metric $d$ on $\T^n$,
$$
d(y_1,y_2)=\min\{ \ |x_1-x_2| \ | \ y_i=\pr(x_i), i=1,2 \ \},
$$
and introduce for $x_0\in\R^n$, $y\in K$ the function
$f(s)=d(\pr(sx_0)+y,K)=\min\limits_{z\in K} d(\pr(sx_0)+y,z)$, $s\in\R$. It is clear that $f(s)$ is an almost periodic function (as a composition of the continuous periodic function
$g(x)=d(\pr(x)+y,K)$ and the linear embedding $s\to sx_0$~). If $x_0\in M$ then $f(s)=0$ for all $s\ge 0$. Since $f(s)$ is an almost periodic function, the latter is possible only if $f\equiv 0$. In particular, $\pr(sx_0)+y\in K$ for all $s\le 0$ and $y\in K$, that is, $-x_0\in M$. Hence, $M$ is a symmetric convex cone, i.e., it is a linear space. If $x\in C$, $y=\pr(z)$, $z\in C$, then $\pr(\lambda x)+y=\pr(\lambda x+z)\in \pr(C)$. Thus, $\pr(\lambda x)+y\in\pr(C)$ whenever $y\in\pr(C)$, $\lambda\ge 0$. By the continuity of the group operation we find that $\pr(\lambda x)+y\in K$ for all $y\in K=\Cl\pr(C)$, $\lambda\ge 0$, that is, $x\in M$. We conclude that $C\subset M$. Since $M$ is a linear subspace, it follows that $L\subset M$. Notice that $0\in K$, $\pr(M)=\pr(M)+0\subset K\not=\T^n$. Let $S=\Cl\pr(M)\subset K$. Then $S$ is a proper closed subgroup of $\T^n$. By Pontryagin duality \cite{Pont}, there exists a character $\chi(y)=e^{2\pi i\xi\cdot y}$, $\xi\in\Z^n$, $\xi\not=0$, such that $\chi(y)=1$ on $S$. It follows that $\xi\cdot x=0$ for all $x\in M$, and in particular, $\xi\in L^\perp$. If the linear functional
$x\to\xi\cdot x$ is bounded on $G$, then the desired statement is proved. Assuming the contrary, that is, this functional is not bounded on $G$, we can find the sequence $p_r\in G$, $r\in\N$, such that $\xi\cdot p_r\to\infty$ as $r\to\infty$.   We introduce the new convex set $G'=\Cl(L+G)$ and remark that
\begin{eqnarray}\label{conv1}
\pr(L+G)=\pr(L)+\pr(G)\subset K+\pr(G)=\nonumber\\ \Cl\pr(C)+\pr(G)\subset \Cl\pr(C+G)\subset\Cl\pr(G).
\end{eqnarray}
We utilized that $C+G\subset G$. To prove this inclusion, we fix $x\in C$, $p\in G$ and observe that
\begin{equation}\label{conv2}
x+p=\lim_{r\to\infty}\left(x+\frac{r-1}{r}p\right),
\end{equation}
while $x+\frac{r-1}{r}p=\frac{1}{r}rx+\frac{r-1}{r}p\in G$ for all $r>1$ by the convexity of $G$ (notice that $rx\in C\subset G$). Since $G$ is closed, relation (\ref{conv2}) implies that $x+p\in G$ for each $x\in C$, $p\in G$, as was to be proved.

By (\ref{conv1}) $\Cl\pr(G')=\Cl\pr(G)$ is a proper subset of $\T^n$. Let $q_r\in L$ be orthogonal projection of $p_r$, so that $p_r-q_r\perp L$. Since $\xi\cdot(p_r-q_r)=\xi\cdot p_r\to\infty$ as $r\to\infty$, we have $\alpha_r=|p_r-q_r|\to +\infty$ as $r\to\infty$. Since $0,p_r-q_r\in G'$ then $\lambda(\alpha_r)^{-1}(p_r-q_r)\in G'$ if $\alpha_r>\lambda\ge 0$. Passing to a subsequence if necessary, we can suppose that the sequence of unite vectors $(\alpha_r)^{-1}(p_r-q_r)\to h$ as $r\to\infty$. Evidently, $|h|=1$ and $h\perp L$. Besides, $\lambda h=\lim_{r\to\infty}\lambda(\alpha_r)^{-1}(p_r-q_r)\in G'$ because the set $G'$ is closed. We find that the cone $C'=L+\{\ \lambda h \ | \ \lambda\ge 0 \ \}\subset G'$ while $\dim C'=m(G)+1$. We see that $n-m(G')<k$. By the induction hypothesis there exists a vector $\xi\in\Z^n$, $\xi\not=0$ such that the corresponding linear functional $x\to\xi\cdot x$ is bounded on $G'$ and therefore also on $G\subset G'$. We prove the assertion of our proposition for the case $n-m(G)=k$. By the principle of mathematical induction, this completes the proof.
\end{proof}

\begin{corollary}\label{cor2}
Assume that the following non-degeneracy condition holds:
\begin{eqnarray}\label{ND1}
\forall\xi\in\Z^n, \xi\not=0 \mbox{ the functions } s\to H(s\xi) \nonumber\\
\mbox{ are not linear in any vicinity of zero. }
\end{eqnarray}
Then for each $\varepsilon>0$ the set $\pr(G)$ is dense in $\T^n$, where the convex set $G$ is given by (\ref{dual}).
\end{corollary}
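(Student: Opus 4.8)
The plan is to argue by contraposition, reducing the statement to Proposition~\ref{prop1} together with the polar relations (\ref{dual}), (\ref{ddual}) and the linearity identity (\ref{lin}). Suppose, to the contrary, that for some $\varepsilon>0$ the projection $\pr(G)$ of the closed convex set $G=(\partial H^*(p_0))'$ fails to be dense in $\T^n$. Since $G$ is convex, Proposition~\ref{prop1} applies directly and yields a nonzero integer vector $\xi\in\Z^n$ such that the linear functional $p\mapsto\xi\cdot p$ is bounded on $G$. I will use this boundedness on both sides and record $C=\sup_{p\in G}|\xi\cdot p|<\infty$.

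Next I would convert this bound into the statement that an entire two-sided segment $\{s\xi : |s|\le\delta\}$ lies in $\partial H^*(p_0)$. Here the key tool is the bipolar description (\ref{ddual}), namely $\partial H^*(p_0)=G'=\{v : p\cdot v\le\varepsilon \ \forall p\in G\}$. Choosing $\delta=\varepsilon/C$ (and $\delta>0$ arbitrary if $C=0$), for every $s$ with $|s|\le\delta$ and every $p\in G$ one has $p\cdot(s\xi)=s(\xi\cdot p)\le|s|\,C\le\varepsilon$, whence $s\xi\in G'=\partial H^*(p_0)$. This is precisely the point where two-sided boundedness is essential: the positive and negative values of $s$ together produce a genuine vicinity of the origin rather than a one-sided ray.

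Finally I would invoke (\ref{lin}), which asserts $H(v)=p_0\cdot v$ for every $v\in\partial H^*(p_0)$. Applying it with $v=s\xi$, $|s|\le\delta$, gives $H(s\xi)=p_0\cdot(s\xi)=(p_0\cdot\xi)\,s$, so the function $s\mapsto H(s\xi)$ is linear on the interval $|s|\le\delta$. As $\xi\in\Z^n$ and $\xi\ne 0$, this directly contradicts the non-degeneracy hypothesis (\ref{ND1}). Therefore $\pr(G)$ must be dense in $\T^n$ for every $\varepsilon>0$, as claimed.

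I expect the only genuinely delicate point to be the translation step in the second paragraph: one must make sure that Proposition~\ref{prop1} furnishes a two-sided bound on $\xi\cdot p$ (so that the single $\delta$ controls $p\cdot(s\xi)$ for $s$ of either sign), and that the resulting segment consequently meets $\partial H^*(p_0)$ on both sides of $0$. Everything else is a routine application of the polar duality (\ref{dual})--(\ref{ddual}) and of the identity (\ref{lin}) already established above.
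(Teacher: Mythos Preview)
Your argument is correct and coincides with the paper's own proof: both proceed by contradiction, invoke Proposition~\ref{prop1} to obtain a nonzero $\xi\in\Z^n$ with $|\xi\cdot p|\le C$ on $G$, use the bipolar identity (\ref{ddual}) to place the segment $\{s\xi:|s|<\varepsilon/C\}$ inside $\partial H^*(p_0)$, and then apply (\ref{lin}) to force linearity of $s\mapsto H(s\xi)$, contradicting (\ref{ND1}). The only addition in your write-up is the explicit handling of the degenerate case $C=0$, which the paper omits.
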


\begin{proof}
Assuming the contrary and applying Proposition~\ref{prop1}, we can find $\xi\in\Z^n$, $\xi\not=0$, and a positive constant $c$ such that
$|\xi\cdot p|\le c$ for all $p\in G$. In view of (\ref{ddual}) this implies that $s\xi\in G'=\partial H^*(p_0)$ for $|s|<\delta=\varepsilon/c$. By (\ref{lin}) we find that the function $H(s\xi)=sp_0\cdot\xi$ is linear on the interval $|s|<\delta$. But this contradicts to our assumption. Thus, $\pr(G)$ is dense in $\T^n$. The proof is complete.
\end{proof}

Let $u_0(x)\in C(\T^n)$ be a periodic function (with the standard lattice of periods $\Z^n$), and $u=u(t,x)$ be the unique v.s. of the problem (\ref{1}), (\ref{2}). Observe that the group $M_0$ coincides with $\Z^n$, and condition (\ref{ND}) reduces to (\ref{ND1}).
Now, we are ready to prove our main result.

\begin{theorem}\label{th2}
Under non-degeneracy condition (\ref{ND1}) a v.s. $u(t,x)$ of (\ref{1}), (\ref{2}) satisfies the following decay property:
\begin{equation}\label{dec1}
u(t,x)\rightrightarrows c=\min u_0(x) \quad \mbox{ as } t\to+\infty.
\end{equation}
\end{theorem}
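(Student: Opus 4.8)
The plan is to combine the Hopf--Lax--Oleinik representation (\ref{HLO}) with the density statement of Corollary~\ref{cor2}, taking $\varepsilon=\sigma/2$ for a prescribed accuracy $\sigma>0$. The lower bound is immediate: since $H^*\ge 0$, each term in (\ref{HLO}) obeys $u_0(y)+tH^*((x-y)/t)\ge u_0(y)\ge\min u_0=c$, whence $u(t,x)\ge c$ on all of $\Pi$. It remains to establish a uniform upper bound $u(t,x)\le c+\sigma$ for all large $t$. For this I would rewrite (\ref{HLO}), after the substitution $y=x-tp$, as $u(t,x)\le u_0(x-tp)+tH^*(p)$, valid for every $p\in\R^n$. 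Fix $\sigma>0$ and set $V=\{y\in\R^n\mid u_0(y)<c+\sigma/2\}$; this is a nonempty $\Z^n$-periodic open set, so $\pr(V)\subset\T^n$ is open, nonempty, and contains a ball $B_\delta(\bar x)$ for some $\bar x$ and $\delta>0$. The idea is to choose $p$ of the form $p=p_0+r/t$, so that $x-tp=x-tp_0-r$ and the running cost equals $tH^*(p_0+r/t)$; the aim is to keep this cost below $\sigma/2$ while steering $\pr(x-tp_0-r)$ into $\pr(V)$.

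The key convex-analytic observation is that, as $t\to+\infty$, the cost $tH^*(p_0+r/t)$ decreases to the one-sided directional derivative of $H^*$ at $p_0$, i.e.\ to the support function $\psi(r)=\sup_{v\in\partial H^*(p_0)}v\cdot r$ of the compact convex set $\partial H^*(p_0)$; here the coercivity assumption (\ref{coerc}) guarantees that $\partial H^*(p_0)$ is compact and that $\psi$ is finite and continuous. By the very definition (\ref{dual}) of $G$ with $\varepsilon=\sigma/2$ one has $G=\{r\mid\psi(r)\le\sigma/2\}$, so the sublevel sets of the running cost approach $G$ from inside. Since $G$ is convex with $0$ in its interior and $\pr(G)$ is dense by Corollary~\ref{cor2}, the projection of the open set $\{r\mid\psi(r)<\sigma/2\}$ is dense as well; hence I can fix finitely many points $r_1,\dots,r_N$ with $\psi(r_i)<\sigma/2$ whose images $\pr(r_1),\dots,\pr(r_N)$ are $\delta$-dense in $\T^n$. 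For each $i$ the convergence $tH^*(p_0+r_i/t)\to\psi(r_i)<\sigma/2$ supplies a threshold, and taking $T$ to be the maximum over this finite family yields $tH^*(p_0+r_i/t)\le\sigma/2$ for all $t\ge T$ and all $i$.

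Finally I would close by a translation-invariance remark. For $t\ge T$ and arbitrary $x$, the finite set $\pr(tp_0)+\{\pr(r_i)\}$ is again $\delta$-dense, because translation is an isometry of $\T^n$; consequently some index $i$ satisfies $\pr(x-tp_0-r_i)\in B_\delta(\bar x)\subset\pr(V)$. For that $i$ the choice $p=p_0+r_i/t$ gives $x-tp\in V$ by periodicity, so $u_0(x-tp)<c+\sigma/2$, and therefore $u(t,x)\le u_0(x-tp)+tH^*(p)<c+\sigma$. Together with $u(t,x)\ge c$ this produces $|u(t,x)-c|<\sigma$ uniformly in $x$ for all $t\ge T$, which is precisely (\ref{dec1}), and it simultaneously identifies the limit constant as $c=\min u_0$.

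The main obstacle, and the genuine content of the argument, is the identification of $G$ as the limiting sublevel set of the running cost $tH^*(p_0+r/t)$ via the directional-derivative limit $tH^*(p_0+r/t)\to\psi(r)$. This is the bridge that converts the purely geometric density of Corollary~\ref{cor2} into the analytic decay estimate; the remaining ingredients (monotone convergence of the cost, compactness of $\partial H^*(p_0)$, density of $\pr(\mathrm{int}\,G)$, and isometry-invariance of $\delta$-density under the $\pr(tp_0)$ shift) are routine once this link is in place, and the construction makes transparent why condition (\ref{ND1}) is exactly what is required.
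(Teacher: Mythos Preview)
Your proposal is correct and follows essentially the same route as the paper: Hopf--Lax--Oleinik, a finite $\delta$-net in $\pr(G)$ supplied by Corollary~\ref{cor2}, and the directional-derivative limit $tH^*(p_0+r/t)\to D_rH^*(p_0)=\max_{v\in\partial H^*(p_0)}r\cdot v\le\varepsilon$ for $r\in G$. The only cosmetic differences are that the paper obtains the lower bound $u\ge c$ from Corollary~\ref{cor1} rather than from $H^*\ge 0$, and phrases the steering step via uniform continuity of $u_0$ (choosing $q_k\in G$ directly) instead of fixing the open sublevel set $V$ and choosing $r_i$ in the interior of $G$.
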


\begin{proof}
We fix $p_0\in\partial H(0)$, $\varepsilon>0$ and consider the corresponding set $G$ introduced in (\ref{dual}). Since $u_0(y)$ is an uniformly continuous function on the compact $\T^n$, there exists such $\delta>0$ that
\begin{equation}\label{uc}
|u_0(y_1)-u_0(y_2)|<\varepsilon \quad \forall y_1,y_2\in\T^n, d(y_1,y_2)\le\delta.
\end{equation}
By Corollary~\ref{cor2}, the set $\pr(G)$ is dense in $\T^n$. Therefore, there exists a finite $\delta$-net $Y=\{y_1,\ldots,y_m\}\subset\pr(G)$. We choose $q_k\in G$ such that $y_k=\pr(q_k)$, $k=1,\ldots,m$. Let a point $y_*\in\T^n$ be such that $u_0(y_*)=c=\min u_0(y)$. Since $Y$ is a $\delta$-net in $\T^n$, then for each $(t,x)\in\Pi$ there exists such $k\in\{1,\ldots,m\}$ that
$$
d(\pr(x-p_0t-q_k),y_*)=d(\pr(x-p_0t)-y_*,y_k)\le\delta.
$$
In view of (\ref{uc}), $u_0(x-p_0t-q_k)<u_0(y_*)+\varepsilon=c+\varepsilon$. From (\ref{HLO}) it follows that
\begin{eqnarray}\label{conv3}
u(t,x)\le u_0(x-p_0t-q_k)+tH^*((x-(x-p_0t-q_k))/t)=\nonumber\\ u_0(x-p_0t-q_k)+tH^*(p_0+q_k/t)<c+\varepsilon+tH^*(p_0+q_k/t).
\end{eqnarray}
Notice also that in view of Corollary~\ref{cor1} with $u_1=u(t,x)$, $u_2\equiv 0$, $u(t,x)\ge c$ for all $(t,x)\in\Pi$.
From this inequality and (\ref{conv3}) it now follows that
\begin{equation}\label{conv4}
c\le u(t,x)<c+\varepsilon+\alpha(t),
\end{equation}
where
$$
\alpha(t)=\max_{k=1,\ldots,m} tH^*(p_0+q_k/t).
$$
Since $H^*(p)$ is a convex function and $H^*(p_0)=0$, there exist limits
$$
\lim_{t\to+\infty}tH^*(p_0+q_k/t),
$$
which coincide with directional derivatives $D_{q_k}H^*(p_0)$. It is known (see \cite{Rock}) that
$$
D_{q_k}H^*(p_0)=\max_{v\in\partial H^*(p_0)} q_k\cdot v,
$$
and since $q_k\in G$ we see that $D_{q_k}H^*(p_0)\le\varepsilon$ for all $k=1,\ldots,m$.
Therefore, $\lim\limits_{t\to+\infty}\alpha(t)\le\varepsilon$ and it follows from (\ref{conv4}) that
$$
\limsup_{t\to +\infty}\|u(t,\cdot)-c\|_\infty\le 2\varepsilon.
$$
To complete the proof it only remains to notice that $\varepsilon>0$ is arbitrary.
\end{proof}

\begin{remark}\label{rem1}
The statement of Theorem~\ref{2} remains valid for arbitrary convex hamiltonian $H(v)$, which may not satisfy the coercivity condition (\ref{coerc}).

Assume first that the initial function $u_0(x)$ is Lipschitz:
$$|u_0(x)-u_0(y)|\le L|x-y| \quad \forall x,y\in\R^n,$$ $L>0$ is a Lipschitz constant.
By Corollary~\ref{cor1} we have
$$
|u(t,x+h)-u(t,x)|\le \sup |u_0(x+h)-u_0(x)|\le L|h| \quad \forall x,h\in\R^n, t>0.
$$
Thus, the functions $u(t,\cdot)$ satisfy the Lipschitz condition with the constant $L$. Therefore, the generalized gradient $\nabla_x u\in L^\infty(\Pi,\R^n)$, $\|\nabla_x u\|_\infty\le L$. This readily implies that $|v|\le L$ whenever $(s,v)\in D^\pm u(t,x)$, $(t,x)\in\Pi$.
We see that the behavior of $H(v)$ for $|v|>L$ does not matter and we can always improve the convex hamiltonian $H(v)$ in the domain $|v|>L$ in such a way that the corrected hamiltonian satisfies the coercivity assumption. It is clear that the non-degeneracy condition (\ref{ND1}) remains valid. By Theorem~\ref{th2} we conclude that decay property (\ref{dec1}) holds.

In the general case we construct the sequence $u_{0k}\in C(\T^n)$, $k\in\N$, of periodic Lipschitz functions such that $u_{0k}\to u_0$ as $k\to\infty$ in $C(\T^n)$. Let $u_k=u_k(t,x)$ be a v.s. of (\ref{1}), (\ref{2}) with initial data $u_{0k}$.
Then, taking into account Corollary~\ref{cor1}, we find that as $k\to\infty$
\begin{equation}\label{conv5}
u_k(t,x)\rightrightarrows u(t,x), \quad c_k=\min u_{0k}(y)\to c=\min u_0(y).
\end{equation}
As was already proved, for each $k\in\N$
$$
u_k(t,\cdot)\rightrightarrows c_k \quad \mbox{ as } t\to+\infty.
$$
In view of (\ref{conv5})
we can pass to the limit as $k\to\infty$ in the above relation and derive the desired result
(\ref{dec1}).
\end{remark}

We underline that condition (\ref{ND1}) can be satisfied even for a hamiltonian linear on each of two half-spaces with a common boundary hyper-space.
\begin{example}
Let $p\in\R^n$ be a nonzero vector. We consider the equation
$$
u_t+|\partial_p u|=0,
$$
where $\partial_p u=p\cdot\nabla_x u$ is the directional derivative of $u$. Obviously, the hamiltonian $H(v)=|p\cdot v|$ satisfies (\ref{ND1}) if and only if
$p\cdot\xi\not=0$ for every $\xi\in\Z^n$, $\xi\not=0$. This means that the coordinates $p_j$, $j=1,\ldots,n$, of the vector $p$ are linearly independent over the field $\Q$ of rationals.
\end{example}

\section{The case of almost periodic initial data}
In this section we prove Theorem~\ref{thM} in the general case $u_0(x)\in AP(\R^n)$.

We will need some simple general properties of v.s. collected in the following lemma.

\begin{lemma}\label{lem1}
(i) If $u(t,x)$ is a v.s. of (\ref{1}), (\ref{2}), then $v=-u(t,x)$ is a v.s. to the problem
$$
v_t-H(-\nabla_x v)=0, \quad v(0,x)=-u_0(x);
$$

(ii) Let $y=Ax$ be a non-degenerate linear operator on $\R^n$, $v_0(y)\in BUC(\R^n)$, $v(t,y)\in BUC_{loc}(\Pi)$. Then the function $u(t,x)=v(t,Ax)$ is a v.s. of (\ref{1}), (\ref{2}) with initial data $u_0(x)=v_0(Ax)$ if and only if $v(t,y)$ is a v.s. of the problem
$$
v_t+H(A^*\nabla_y v)=0, \quad v(0,y)=v_0(y),
$$
where $A^*$ is the conjugate operator;

(iii) Let $H(p,q)\in C(\R^n\times\R^m)$. We consider the equation
\begin{equation}\label{a1}
U_t+H(\nabla_x U,\nabla_y U)=0
\end{equation}
in the half-space $\{ \ (t,x,y) \ | \ t>0, x\in\R^n, y\in\R^m \ \}$. Then $U(t,x,y)=u(t,y)$ is a non-depending on $x$ v.s. of (\ref{a1}) if and only if $u(t,y)$ is a v.s. of the reduced equation
$$
u_t+H(0,\nabla_y u)=0, \quad (t,y)\in\R_+\times\R^m.
$$
\end{lemma}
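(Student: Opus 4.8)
The plan is to verify each of the three parts directly from Definition~\ref{def1}, using the chain rule for test functions and the correspondence between sub/superdifferentials under the relevant transformations. The unifying principle is that all three statements are purely pointwise assertions about $D^{\pm}u$, so the work reduces to tracking how the test-function gradients transform and checking that the defining inequalities $s+H(v)\le 0$ (resp.\ $\ge 0$) are preserved.

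For part (i), I would start from the observation that if $\varphi(t,x)\in C^1(\Pi)$ touches $u$ from above at $(t_0,x_0)$ (i.e.\ $(t_0,x_0)$ is a local max of $u-\varphi$), then $-\varphi$ touches $-u$ from below there, so $(s,v)\in D^+u(t_0,x_0)$ if and only if $(-s,-v)\in D^-(-u)(t_0,x_0)$, and symmetrically. Writing $v=-u$, a pair $(s,v)\in D^-v$ corresponds to $(-s,-v)\in D^+u$, hence $-s+H(-v)\le 0$, which rearranges to $s-H(-(-v))\ge 0$, i.e.\ $s-H(-\nabla_x v)\ge 0$ in the intended notation; this is precisely the supersolution inequality for the new Hamiltonian $\tilde H(v)=-H(-v)$, and the subsolution inequality follows the same way by swapping $D^+$ and $D^-$. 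The initial data transform as $v(0,x)=-u(0,x)$, matching $-u_0$.

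For part (ii), the key is that if $y=Ax$ with $A$ invertible, then $\psi(t,y)\in C^1$ touches $v$ at $(t_0,y_0)$ exactly when $\varphi(t,x)=\psi(t,Ax)$ touches $u=v(t,A\,\cdot)$ at $(t_0,A^{-1}y_0)$, since the linear change of variables is a diffeomorphism preserving local extrema. By the chain rule $\nabla_x\varphi=A^*\nabla_y\psi$ and $\varphi_t=\psi_t$, so $(s,v)\in D^{\pm}u(t_0,x_0)$ corresponds to $(s,w)\in D^{\pm}v(t_0,y_0)$ with $v=A^*w$; substituting into $s+H(v)=s+H(A^*w)$ shows the subsolution/supersolution inequalities for $u$ with Hamiltonian $H$ match those for $v$ with Hamiltonian $w\mapsto H(A^*w)$, as claimed. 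For part (iii), when $U(t,x,y)=u(t,y)$ is independent of $x$, the main point is that a test function $\Phi(t,x,y)$ touching $U$ at $(t_0,x_0,y_0)$ must have $\nabla_x\Phi(t_0,x_0,y_0)=0$ (otherwise one could move in $x$ to break the extremum, since $U$ does not vary in $x$), so the admissible superdifferential vectors are exactly $(s,0,q)$ with $(s,q)\in D^+u(t_0,y_0)$; plugging into $s+H(0,q)$ gives the reduced equation.

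The main obstacle is the careful justification in part (iii) that one may restrict to test functions with vanishing $x$-gradient: one must show that $(s,p,q)\in D^+U$ forces $p=0$, and conversely that every $(s,q)\in D^+u$ lifts to an admissible touching function for $U$. I would handle the forward direction by noting that if $p\neq 0$ then $\Phi(t,x,y)-\tfrac{1}{2}|p|^{-1}p\cdot(x-x_0)\cdots$ — more cleanly, since $U-\Phi$ has a local max at $(t_0,x_0,y_0)$ and is constant in $x$, the function $x\mapsto\Phi(t_0,x,y_0)$ has a local min there, forcing $\nabla_x\Phi=0$. The converse lift is immediate by setting $\Phi(t,x,y)=\varphi(t,y)$. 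Parts (i) and (ii) I expect to be routine once the touching-function correspondences are stated.
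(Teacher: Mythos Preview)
Your proposal is correct and follows essentially the same route as the paper's proof: part (i) via the bijection $(s,w)\in D^\pm(-u)\Leftrightarrow(-s,-w)\in D^\mp u$, part (ii) via the chain rule identity $\nabla_x\psi(t,Ax)=A^*\nabla_y\psi(t,y)$ and the fact that $A$ preserves local extrema, and part (iii) via the identification $D^\pm U(t,x,y)=\{(s,0,q):(s,q)\in D^\pm u(t,y)\}$. The paper simply declares the latter equality ``evident'' without argument, whereas you supply the justification (that a local extremum of $U-\Phi$ forces $\nabla_x\Phi=0$ since $U$ is constant in $x$, and conversely any test function $\varphi(t,y)$ lifts trivially); apart from a harmless notational overload of the symbol $v$ in your part (i), there is nothing to correct.
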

\begin{proof}
(i) As is easy to verify, $(s,w)\in D^\pm v(t_0,x_0)$ if and only if $(-s,-w)\in D^\mp u(t_0,x_0)$. Since $u(t,x)$ is a v.s. of (\ref{1}) we obtain that, respectively, $\pm(-s+H(-w))\ge 0$, i.e., $\mp(s-H(-w))\ge 0$. By the definition, this means that $v(t,x)$ is a v.s. of the equation $v_t-H(-\nabla_x v)=0$. Since the initial condition $v(0,x)=-u_0(x)$ is evident, this completes the proof of (i).

Assertion (ii) follows from the fact that $(t_0,x_0)$ is a point of local maximum (minimum) of $u(t,x)-\psi(t,Ax)$, with $\psi(t,y)\in C^1(\Pi)$,
if and only if $(t_0,Ax_0)$ is a point of local maximum (minimum) of $v(t,y)-\psi(t,y)$ and from the classical identity $A^*\nabla_y \psi(t,y)=\nabla_x \psi(t,Ax)$, $y=Ax$.

Finally, assertion (iii) readily follows from the evident equalities
$$D^\pm U(t,x,y)=\{ (s,0,v)\in \R\times\R^n\times\R^m \ | \ (s,v)\in D^\pm u(t,y) \ \}.$$
\end{proof}

We also will use one more property of v.s.
In the half space $\R_+\times\R^n\times\R^m$ we consider the Cauchy problem
for equation
\begin{equation}\label{l1}
u_t+H(\nabla_x u)=0, \quad u=u(t,x,y), \ (t,x,y)\in \R_+\times\R^n\times\R^m,
\end{equation}
with initial condition
\begin{equation}\label{l2}
u(0,x,y)=u_0(x,y)\in BUC(\R^n\times\R^m).
\end{equation}

\begin{lemma}\label{lem2}
A function $u(t,x,y)\in BUC_{loc}(\R_+\times\R^n\times\R^m)$ is a v.s. of (\ref{l1}),
(\ref{l2}) if and only if for all fixed $y\in\R^m$ the functions $u^y(t,x)=u(t,x,y)$
is a v.s. of (\ref{1}), (\ref{2}) with initial data $u_0^y(x)=u_0(x,y)$.
\end{lemma}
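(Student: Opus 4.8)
The plan is to treat the two implications separately: the ``if'' direction is elementary, while the ``only if'' direction is the substantive one and requires a penalization argument to control the $y$-dependence. The initial conditions cause no trouble in either direction, since a v.s. of (\ref{l1}), (\ref{l2}) satisfies $u(0,x,y)=u_0(x,y)$, which is equivalent to $u^y(0,x)=u_0^y(x)$ for every $y$; hence it suffices to match up the interior sub- and superdifferential inequalities at points with $t_0>0$.

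I would first dispose of the ``if'' direction. Assume every slice $u^y$ is a v.s. Fix $(t_0,x_0,y_0)$ with $t_0>0$ and take $(s,v,w)\in D^+u(t_0,x_0,y_0)$, realized by a test function $\Phi(t,x,y)\in C^1$ for which $u-\Phi$ has a local maximum at $(t_0,x_0,y_0)$ with $\nabla\Phi=(s,v,w)$. Freezing $y=y_0$, the function $(t,x)\mapsto\Phi(t,x,y_0)$ touches $u^{y_0}$ from above at $(t_0,x_0)$ with $(t,x)$-gradient $(s,v)$, so $(s,v)\in D^+u^{y_0}(t_0,x_0)$; since $u^{y_0}$ is a v.subs. and $H$ depends only on the $x$-gradient, $s+H(v)\le0$. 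Freezing $y=y_0$ in a test function from $D^-u$ gives the supersolution inequality in the same way, so $u$ is a v.s. of (\ref{l1}), (\ref{l2}).

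The ``only if'' direction is where the real work lies. The obstacle is that if $(s,v)\in D^+u^{y_0}(t_0,x_0)$ is realized by $\varphi(t,x)\in C^1$, then $\varphi$ need not touch $u$ from above at $(t_0,x_0,y_0)$ in the full space, because $u(t,x,y)$ may exceed $u(t,x,y_0)$ for $y$ near $y_0$ and this variation cannot be dominated by a fixed $C^1$ function of $(t,x)$ alone. I would resolve this by penalizing in $y$. After replacing $\varphi$ by $\varphi+|t-t_0|^2+|x-x_0|^2$, which does not change the gradient at $(t_0,x_0)$, I may assume $(t_0,x_0)$ is a strict local maximum of $u^{y_0}-\varphi$. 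For $\lambda>0$ set $\Psi_\lambda(t,x,y)=u(t,x,y)-\varphi(t,x)-\lambda|y-y_0|^2$ on a small closed ball $\bar B\subset\{t>0\}$ about $(t_0,x_0,y_0)$, and let $(t_\lambda,x_\lambda,y_\lambda)$ maximize $\Psi_\lambda$ over $\bar B$. Comparing with the value at the centre gives $\Psi_\lambda(t_\lambda,x_\lambda,y_\lambda)\ge u(t_0,x_0,y_0)-\varphi(t_0,x_0)$, and the boundedness of $u$ on $\bar B$ keeps $\lambda|y_\lambda-y_0|^2$ bounded, whence $y_\lambda\to y_0$. Any limit point $(\bar t,\bar x)$ of $(t_\lambda,x_\lambda)$ then satisfies $u(\bar t,\bar x,y_0)-\varphi(\bar t,\bar x)\ge u(t_0,x_0,y_0)-\varphi(t_0,x_0)$ by continuity of $u$, so by strictness $(\bar t,\bar x)=(t_0,x_0)$; thus $(t_\lambda,x_\lambda,y_\lambda)\to(t_0,x_0,y_0)$ and lies in the interior of $\bar B$ for large $\lambda$. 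There the $C^1$ function $(t,x,y)\mapsto\varphi(t,x)+\lambda|y-y_0|^2$ touches $u$ from above, so its gradient $(\varphi_t(t_\lambda,x_\lambda),\nabla_x\varphi(t_\lambda,x_\lambda),2\lambda(y_\lambda-y_0))$ lies in $D^+u(t_\lambda,x_\lambda,y_\lambda)$. As $u$ is a v.subs. and $H$ ignores the $y$-component, $\varphi_t(t_\lambda,x_\lambda)+H(\nabla_x\varphi(t_\lambda,x_\lambda))\le0$; letting $\lambda\to+\infty$ and using continuity of $\nabla\varphi$ and of $H$ yields $s+H(v)\le0$. For the supersolution inequality one argues symmetrically: given $(s,v)\in D^-u^{y_0}(t_0,x_0)$ realized by $\varphi$ touching from below, reduce to a strict minimum and minimize $u-\varphi+\lambda|y-y_0|^2$; the minimizer produces $(\varphi_t,\nabla_x\varphi,-2\lambda(y_\lambda-y_0))\in D^-u$, and $u$ being a v.supers. gives $s+H(v)\ge0$ in the limit.

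I expect the main obstacle to be precisely the convergence step of the penalization, namely guaranteeing that the maximizers $(t_\lambda,x_\lambda,y_\lambda)$ return to $(t_0,x_0,y_0)$; this is why the reduction to a strict extremum and the localization to a compact $\bar B$ inside $\{t>0\}$ are performed first. The remaining ingredients---the restriction argument for the ``if'' part and the final passage to the limit via continuity of $H$---are routine.
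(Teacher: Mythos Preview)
Your proposal is correct and follows essentially the same route as the paper: the ``if'' direction by restriction to $y=y_0$, and the ``only if'' direction by reducing to a strict extremum and applying a quadratic penalization $\varphi(t,x)+\lambda|y-y_0|^2$ (the paper uses an integer parameter $k$ in place of your $\lambda$) to force the maximizers back to $(t_0,x_0,y_0)$ before passing to the limit. The only cosmetic difference is that the paper spells out explicitly the radii $r,h$ and constants $c,C_0$ used to localize, whereas you work directly on a small closed ball $\bar B$; the content is the same.
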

For the sake of completeness we provide below the proof of this lemma.
\begin{proof}
Let $u(t,x,y)$ be a v.s. of (\ref{l1}), (\ref{l2}), and $y_0\in\R^m$. We assume that $\varphi(t,x)\in C^1(\Pi)$ and
$(t_0,x_0)\in\Pi$ is a point of local maximum of $u^{y_0}-\varphi$. Moreover, replacing $\varphi$ by $\varphi(t,x)+(t-t_0)^2+|x-x_0|^2+u(t_0,x_0,y_0)-\varphi(t_0,x_0)$, we can suppose, without loss of generality, that $(t_0,x_0)\in\Pi$ is a point of strict local maximum of $u^{y_0}-\varphi$, and that in this point
$u^{y_0}(t_0,x_0)-\varphi(t_0,x_0)=0$.  Therefore, there exists $c>0$ such that
$$
\varphi(t,x)-u(t,x,y_0)>c \quad \forall (t,x)\in\Pi, \ (t-t_0)^2+|x-x_0|^2=r^2,
$$
for some $r\in (0,t_0)$.
By the continuity there exists $h>0$ such that $\varphi(t,x)-u(t,x,y)>c/2$ for all $(t,x,y)\in\R_+\times\R^n\times\R^m$, $(t-t_0)^2+|x-x_0|^2=r^2$, $|y-y_0|\le h$. We can choose such $C_0>0$ that
$$
C_0h^2-c>\max\{ \ u(t,x,y)-\varphi(t,x) \ | \ (t-t_0)^2+|x-x_0|^2\le r^2, \ |y-y_0|=h \ \}.
$$
Then for each natural $k>C_0$ the function $p_k(t,x,y)=\varphi(t,x)+k|y-y_0|^2\in C^1(\R_+\times\R^n\times\R^m)$ and satisfies the property
\begin{equation}\label{l3}
p_k(t,x,y)-u(t,x,y)>c/2>0=p_k(y_0,x_0,y_0)-u(t_0,x_0,y_0)
\end{equation}
$\forall (t,x,y)\in \partial V_{r,h}$, where we denote by $V_{rh}$ the domain
$$
V_{rh}=\{ \ (t,x,y)\in \R_+\times\R^n\times\R^m \ | \ (t-t_0)^2+|x-x_0|^2<r^2, \ |y-y_0|<h \ \}.
$$
In view of (\ref{l3}) the point $(t_k,x_k,y_k)$ such that
$$
p_k(t_k,x_k,y_k)-u(t_k,x_k,y_k)=\min\limits_{(t,x,y)\in \Cl V_{rh}} (p_k(t,x,y)-u(t,x,y))
$$
lies in $V_{rh}$ and, therefore, it is a point of local maximum of the difference $u(t,x,y)-p_k(t,x,y)$.
Since $\nabla p_k(t,x,y)=(\partial_t\varphi(t,x),\nabla_x\varphi(t,x),2k(y-y_0))$, then by the definition of v.s. of
(\ref{l1})
\begin{equation}\label{l4}
\partial_t\varphi(t_k,x_k)+H(\nabla_x\varphi(t_k,x_k))\le 0.
\end{equation}
Since $\min\limits_{(t,x,y)\in \Cl V_{rh}} (p_k(t,x,y)-u(t,x,y))\le p_k(t_0,x_0,y_0)-u(t_0,x_0,y_0)=0$, then  $k|y_k-y_0|^2\le m=\max\limits_{(t,x,y)\in \Cl V_{rh}} (u(t,x,y)-\varphi(t,x))$. In particular $y_k\to y_0$ as $k\to\infty$. Taking into account that $(t_0,x_0)$ is a point of strict local maximum of $u(t,x,y_0)-\varphi(t,x)$,
we derive that $(t_k,x_k)\to (t_0,x_0)$ as $k\to\infty$. Therefore, it follows from (\ref{l4}) in the limit as $k\to\infty$ that
$$
\partial_t\varphi(t_0,x_0)+H(\nabla_x\varphi(t_0,x_0))\le 0.
$$
This means that $u(t,x,y_0)$ is a v.subs. of (\ref{1}).
By the similar reasons we obtain that
$$
\partial_t\varphi(t_0,x_0)+H(\nabla_x\varphi(t_0,x_0))\ge 0
$$
whenever $(t_0,x_0)$ is a point of strict local minimum of $u(t,x,y_0)-\varphi(t,x)$, where $\varphi(t,x)\in C^1(\Pi)$, that is, $u(t,x,y_0)$ is a v.supers. of (\ref{1}). Thus, $u(t,x,y_0)$ is a v.s. of (\ref{1}) for each $y_0\in\R^m$.

Conversely, assume that $u^y(t,x)$ is a v.s. of (\ref{1}) for every $y\in\R^m$. Suppose that $\varphi(t,x,y)\in C^1(\R_+\times\R^n\times\R^m)$ and that $(t_0,x_0,y_0)$ is a point of local maximum (minimum) of $u(t,x,y)-\varphi(t,x,y)$. Then the point $(t_0,x_0)\in\Pi$ is a point of local maximum (minimum) of
$u^{y_0}(t,x)-\varphi(t,x,y_0)$. Since $u^{y_0}$ is a v.s. of (\ref{1}) then
$\varphi_t(t_0,x_0,y_0)+H(\nabla_x\varphi(t_0,x_0,y_0))\le 0$ (respectively, \\ $\varphi_t(t_0,x_0,y_0)+H(\nabla_x\varphi(t_0,x_0,y_0))\ge 0$). Hence, $u(t,x,y)$ is a v.s. of (\ref{l1}).
To complete the proof it only remains to notice that initial condition (\ref{l2}) is satisfied if and only if $u^y(t,x)$ satisfies (\ref{2}) with initial data $u_0^y$ for all $y\in\R^m$.
\end{proof}

Now, we can extend the statement of Lemma~\ref{lem1}(ii) to the case of arbitrary linear maps.

\begin{proposition}\label{prop2} Let $\Lambda:\R^n\to\R^m$ be a linear map, and $v(t,y)$ be a v.s. to the problem
\begin{equation}\label{1r}
v_t+H(\Lambda^*\nabla_y v)=0, \quad v(0,y)=v_0(y)
\end{equation}
in the half-space $(t,y)\in \R_+\times\R^m$. Then $u(t,x)=v(t,\Lambda x)$ is a v.s. of original problem (\ref{1}), (\ref{2})
with initial function $u_0(x)=v_0(\Lambda x)$.
\end{proposition}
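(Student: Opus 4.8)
The plan is to reduce the general linear map $\Lambda$ to a normal form by a short sequence of elementary reductions, each governed by one of Lemmas~\ref{lem1} and \ref{lem2}. The formal motivation is the chain rule $\nabla_x[v(t,\Lambda x)]=\Lambda^*\nabla_y v$, which makes the identity $u_t+H(\nabla_x u)=v_t+H(\Lambda^*\nabla_y v)$ evident for smooth $v$; the difficulty is purely at the level of viscosity test functions, since when $\Lambda$ is neither injective nor surjective a test function touching $u$ at $(t_0,x_0)$ need not descend to a test function touching $v$ at $(t_0,\Lambda x_0)$. I would therefore not argue directly with test functions but instead factor $\Lambda$ through its rank and delegate the kernel and cokernel directions to the lemmas.

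First I would normalise $\Lambda$ by non-degenerate changes of variables. Let $r=\mathrm{rank}\,\Lambda$. Choosing linear isomorphisms $Q$ of $\R^n$ and $P$ of $\R^m$ carrying $\ker\Lambda$ and $\mathrm{Im}\,\Lambda$ onto coordinate subspaces, I can assume, after applying Lemma~\ref{lem1}(ii) once on $\R^n$ (to the target problem) and once on $\R^m$ (to the $v$-equation, viewed as a Hamilton--Jacobi equation on $\R^m$ with Hamiltonian $q\mapsto H(\Lambda^*q)$), that $\Lambda$ has the standard block form
$$\Lambda(\xi_1,\xi_2)=(\xi_1,0),\qquad \xi_1\in\R^r,\ \xi_2\in\R^{n-r},$$
with target space split as $\R^m=\R^r\times\R^{m-r}$ and with $H$ replaced by a transformed convex Hamiltonian $\tilde H(p_1,p_2)$ on $\R^n=\R^r\times\R^{n-r}$. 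The point to watch is that the two changes of variables transform $H$ and $\Lambda^*$ consistently, so that the conjugated source equation is exactly $v_t+\tilde H(\Lambda^*\nabla_y v)=0$ for the new $\tilde H$ and the new $\Lambda$; this is routine linear algebra. Since now $\Lambda^*(q_1,q_2)=(q_1,0)$, the source equation becomes $v_t+\tilde H(\nabla_{y_1}v,0)=0$, involving only the gradient in the first $r$ variables.

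I would then remove the two redundant groups of variables. Because $u(t,\xi_1,\xi_2)=v(t,\xi_1,0)$ does not depend on $\xi_2$, Lemma~\ref{lem1}(iii) (applied up to reordering the coordinate slots) reduces the claim that $u$ is a v.s. of $u_t+\tilde H(\nabla u)=0$ to the claim that $\bar u(t,\xi_1):=v(t,\xi_1,0)$ is a v.s. of $\bar u_t+\tilde H(\nabla_{\xi_1}\bar u,0)=0$. On the other hand, in the source equation the variable $y_2\in\R^{m-r}$ enters only as a passive parameter, so Lemma~\ref{lem2} applies and shows that for every fixed $y_2$ the slice $v(t,\cdot,y_2)$ is a v.s. of $v_t+\tilde H(\nabla_{y_1}v,0)=0$ on $\R_+\times\R^r$; taking $y_2=0$ yields exactly that $\bar u$ solves the reduced equation above. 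Thus the frozen-slice equation produced by Lemma~\ref{lem2} coincides with the reduced equation demanded by Lemma~\ref{lem1}(iii), and the two fit together to finish the argument; unwinding the isomorphisms $P,Q$ returns the statement for the original $\Lambda$. The initial condition is immediate from $u(0,x)=v(0,\Lambda x)=v_0(\Lambda x)$, and membership of all the functions in the relevant $BUC_{loc}$ spaces is preserved because $\Lambda$ is Lipschitz.

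I expect the main obstacle to be the non-surjective direction. Handling $\ker\Lambda$ is the soft part: a function constant in some directions trivially solves the equation with the corresponding gradient slot frozen, which is precisely Lemma~\ref{lem1}(iii). The delicate point is that non-surjectivity must be absorbed by Lemma~\ref{lem2}, and this works only because, after normalisation, $\Lambda^*$ annihilates the extra target directions, so that those variables are genuinely inert in the source equation and may be frozen at $y_2=0$. Checking that the resulting frozen equation matches the reduced equation coming from Lemma~\ref{lem1}(iii) --- equivalently, keeping the Hamiltonian transformations aligned through all the reductions --- is where the care is needed.
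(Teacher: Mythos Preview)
Your argument is correct and uses the same three ingredients (Lemma~\ref{lem1}(ii), Lemma~\ref{lem1}(iii), Lemma~\ref{lem2}) as the paper, but the factorisation is different. You put $\Lambda$ into rank-normal form via isomorphisms on source and target, then handle the $n-r$ kernel directions with Lemma~\ref{lem1}(iii) and the $m-r$ cokernel directions with Lemma~\ref{lem2}; the bookkeeping you flag (that the two applications of Lemma~\ref{lem1}(ii) transform $H$ and $\Lambda^*$ consistently) does check out. The paper instead avoids the rank decomposition entirely by \emph{embedding} into $\R^{n+m}$: it introduces the single invertible shear $\tilde\Lambda(x,z)=(x,z+\Lambda x)$ on $\R^{n+m}$, lifts $v$ to the extended space as a function independent of $x$ via Lemma~\ref{lem1}(iii), applies Lemma~\ref{lem1}(ii) once to the invertible $\tilde\Lambda$, and then uses Lemma~\ref{lem2} to freeze the auxiliary variable $z$ at $0$. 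The paper's route is shorter and coordinate-free (no choice of $P,Q$, no tracking of a modified Hamiltonian through two changes of variables); your route is more explicitly structural, making visible that the kernel is handled by (iii) and the cokernel by Lemma~\ref{lem2}, at the cost of the extra linear-algebra bookkeeping.
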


\begin{proof}
We introduce the invertible linear operator $\tilde\Lambda$ on the extended space $\R^{n+m}$, defined by the equality $\tilde\Lambda (x,z)=(x,z+\Lambda x)$.
Since ${\tilde\Lambda^*(x,y)=(x+\Lambda^*y,y)}$, equation (\ref{1r}) can be rewritten in the form $$v_t+H(\tilde\Lambda^*(0,\nabla_y v))=0,$$ where $H(p,q)=H(p)$, $p\in\R^n$, $q\in\R^m$. By Lemma~\ref{lem1}(iii) the function $v=v(t,y)$ is a v.s. of equation
$$v_t+H(\tilde\Lambda^*(\nabla_x v,\nabla_y v))=0$$ in the extended domain $(t,x,y)\in\R_+\times\R^n\times\R^m$.
Then, by Lemma~\ref{lem1}(ii) the function $u(t,x,z)=v(t,z+\Lambda x)$ is a v.s. of (\ref{1}) considered in the extended domain $(t,x,z)\in\R_+\times\R^n\times\R^m$.  Applying Lemma~\ref{lem2}
we conclude that $u^z(t,x)=u(t,x,z)$ is a v.s. of (\ref{1}) for all $z\in\R^m$. Taking $z=0$ we find that
$u(t,x)=v(t,\Lambda x)$ is a v.s. of (\ref{1}). It is clear that $u(0,x)=v_0(\Lambda x)=u_0(x)$, that is, $u(t,x)$
is a v.s. of original problem (\ref{1}), (\ref{2}).
\end{proof}

Now we are ready to prove our main Theorem~\ref{thM}.

\begin{proof}[Proof of Theorem~\ref{thM}]
We first assume that the initial function is a trigonometric polynomial $\displaystyle u_0(x)=\sum_{\lambda\in S}a_\lambda e^{2\pi i\lambda\cdot x}$. Here $S=Sp(u_0)\subset\R^n$ is a finite set.  Then the subgroup $M_0$ is a finite generated torsion-free abelian group and therefore it is a free abelian group of
finite rank (see \cite{Lang}). Hence, there is a basis $\lambda_j\in M_0$, $j=1,\ldots,m$, so that every element $\lambda\in M_0$ can be uniquely represented as $\displaystyle\lambda=\lambda(\bar k)=\sum_{j=1}^m k_j\lambda_j$, $\bar k=(k_1,\ldots,k_m)\in\Z^m$. In particular, we can represent the initial function as
$$u_0(x)=\sum_{\bar k\in J} a_{\bar k}e^{2\pi i\sum_{j=1}^m k_j\lambda_j\cdot x}, \quad a_{\bar k}\doteq a_{\lambda(\bar k)},$$
where $J=\{ \ \bar k\in\Z^m \ | \ \lambda(\bar k)\in S \ \}$ is a finite set. By this representation $u_0(x)=v_0(y(x))$, where
$$
v_0(y)=\sum_{\bar k\in J} a_{\bar k}e^{2\pi i\bar k\cdot y}
$$
is a periodic function on $\R^m$ with the standard lattice of periods $\Z^m$ while $y=\Lambda x$ is a linear map from  $\R^n$ to $\R^m$ defined by the equalities $\displaystyle y_j=\lambda_j\cdot x$, $j=1,\ldots,m$. We consider the Hamilton-Jacobi equation (\ref{1r}). Let $v(t,y)$ be a v.s. of the Cauchy problem for equation (\ref{1r}) with initial function $v_0(y)$. Then by Proposition~\ref{prop2} we have the identity $u(t,x)=v(t,\Lambda x)$. Let us verify that the hamiltonian $\tilde H(w)=H(\Lambda^* w)$ of equation (\ref{1r}) satisfies condition (\ref{ND1}). Indeed,
\begin{equation}\label{conv6}
\tilde H(s\xi)=H(s\Lambda^*\xi)=H(s\lambda),
\end{equation}
where $\lambda=\Lambda^*\xi=\sum_{j=1}^m \xi_j\lambda_j\in M_0$ for each $\xi=(\xi_1,\ldots,\xi_m)\in\Z^m$. Since  $\lambda_j$, $j=1,\ldots,m$, is a basis, $\lambda=\Lambda^*\xi\not=0$ if $\Z^m\ni\xi\not=0$. By assumption (\ref{ND})
for every $\lambda\in M_0$, $\lambda\not=0$, the function $s\to H(s\lambda)$ is not linear in any vicinity of zero.
In view of (\ref{conv6}) the convex hamiltonian $\tilde H(w)$ satisfies the non-degeneracy requirement (\ref{ND1}) (with dimension $m$ instead of $n$). By Theorem~\ref{th2}
$$
v(t,y)\rightrightarrows c=\min v_0(y)
$$
as $t\to+\infty$. Since $u_0(x)=v_0(\Lambda x)$, $u(t,x)=v(t,\Lambda x)$, the latter relation reduces to the following one
$$
u(t,x)\rightrightarrows c=\min v_0(y) \ \mbox{ as } t\to +\infty.
$$
Observe, that the set $\pr(\Lambda(\R^n))$ is dense in $\T^m$ (in particular, this follows from Proposition~\ref{prop1}) while $v_0(y)\in C(\T^m)$.
Therefore, $c=\min v_0(y)=\inf v_0(\Lambda x)=\inf u_0(x)$, which completes the proof in the case when $u_0(x)$ is a trigonometric polynomial.

\medskip
The general case of arbitrary $u_0\in AP(\R^n)$ will be treated by approximation arguments. There exists a sequence of trigonometric polynomials $u_{0m}(x)$, $m\in\N$, such that $Sp(u_{0m})\subset M_0$ and $u_{0m}\rightrightarrows u_0$ as $m\to\infty$. For instance, we can choose $u_{0m}$ as the sequence of Bochner-Fej\'er trigonometric polynomials, see \cite{LevZh}. Let $u_m(t,x)$ be a v.s. of (\ref{1}), (\ref{2}) with initial data $u_{0m}$. By Corollary~\ref{cor1}
$$
\|u_m-u\|_\infty\le\|u_{0m}-u_0\|_\infty\to 0 \ \mbox{ as } t\to\infty.
$$
As we have already established in the first part of the proof, v.s. $u_m$ satisfy the decay property
\begin{equation}\label{conv7}
u_m(t,x)\rightrightarrows c_m=\inf u_{0m}(x).
\end{equation}
Since $u_m\rightrightarrows u$, $c_m\to c=\inf u_0(x)$ as $m\to\infty$, then the assertion of Theorem~\ref{thM} follows from (\ref{conv7}) in the limit as $m\to\infty$.
\end{proof}

\begin{remark}\label{rem2}
In the case of concave hamiltonian $H(v)$
\begin{equation}\label{deconc}
u(t,\cdot)\rightrightarrows c=\sup u_0(x) \quad \mbox{ as } t\to+\infty.
\end{equation}
Indeed, by Lemma~\ref{lem1}(i) the function $w=-u(t,x)$ is a v.s. of the problem
$$
w_t-H(-w_x)=0, \quad w(0,x)=-u_0(-x),
$$
with the convex hamiltonian $-H(-w)$. By Theorem~\ref{thM}
$$
w(t,x)=-u(t,x)\rightrightarrows \inf -u_0(x)=-\sup u_0(x) \quad \mbox{ as } t\to+\infty,
$$
which reduces to (\ref{deconc}).
\end{remark}

\textbf{Acknowledgments}.
The research was carried out under support of the Russian Foundation for Basic Research (grant no. 15-01-07650-a)  and the Ministry of Education and Science of Russian Federation (project no. 1.445.2016/1.4).

\end{document}